\providecommand\@dotsep{5}\def\listtodoname{List of Todos}\def\listoftodos{\hypersetup{linkcolor=black}\@starttoc{tdo}\listtodoname\hypersetup{linkcolor=blue}}\makeatother
\newtheorem{theorem}{Theorem}[]
\newtheorem{corollary}[theorem]{Corollary}
\theoremstyle{remark}
\numberwithin{equation}{section}
\def\C{\mathbb C}
\def\R{\mathbb R}
\def\N{\mathbb N}
\renewcommand{\leq}{\leqslant}
\renewcommand{\geq}{\geqslant}
\def\p{\partial}
\newcommand{\pair}[1]{\left\langle #1 \right\rangle}
\newcommand*\xbar[1]{%
   \hbox{%
     \vbox{%
       \hrule height 0.5pt 
       \kern0.5ex
       \hbox{%
         \ensuremath{#1}%
       }%
     }%
   }%
} 
\title[Remarks on the anisotropic Calder\'{o}n problem]{Remarks on the anisotropic Calder\'{o}n problem}
\author[C\^{a}rstea]{C\u{a}t\u{a}lin I. C\^{a}rstea}
\address{School of Mathematics, Sichuan University, Chengdu, Sichuan, China.} 
\email{catalin.carstea@gmail.com}
\author[Feizmohammadi]{Ali Feizmohammadi}
\address{Fields institute, 222 College St, Toronto, ON M5T 3J1}
\email{afeizmoh@fields.utoronto.ca}
\author[Oksanen]{Lauri Oksanen}
\address{Department of Mathematics and Statistics, University of Helsinki, P.O 68, University of Helsinki, Finland.}
\email{lauri.oksanen@helsinki.fi}
\begin{document}


\maketitle
\begin{abstract}
We show uniqueness results for the anisotropic Calder\'{o}n problem stated on transversally anisotropic manifolds. Moreover, we give a convexity result for the range of Dirichlet-to-Neumann maps on general Riemannian manifolds near the zero potential. Finally, we present results for Calder\'{o}n type inverse problems associated to semilinear elliptic equations on general Riemannian manifolds.
\end{abstract}
\section{Introduction}
In \cite{Calderon}, A. P. Calder\'{o}n posed the following problem: Is it possible to determine the conductivity matrix of a medium by performing electrical measurements on its boundary? This problem is commonly known as the anisotropic Calder\'{o}n problem and it has a geometric formulation, see e.g. \cite{LU}. To state this formulation, let $(M,g)$ be a smooth compact Riemannian manifold with boundary. We write $M^{\textrm{int}}$ for the interior of $M$ and $\p M$ for its boundary so that $M=M^{\textrm{int}}\cup \p M$. 
The dimension of $M$ is denoted by $n$ and we assume that $n \geq 2$.
The Laplace--Beltrami operator $\Delta_g$ on $(M,g)$ is defined in local coordinates by
\begin{equation}
\label{laplace}
\Delta_gu=\frac{1}{\sqrt{\det g}}\sum_{j,k=1}^n\frac{\p}{\p x^j}\left(\sqrt{\det g}\,g^{jk}\frac{\p u}{\p x^k}\right)\quad \forall\, u\in C^{\infty}(M).
\end{equation} 
Let $f\in H^{\frac{3}{2}}(\p M)$ and consider the unique harmonic function with Dirichlet data $f$, that is to say,
\begin{equation}\label{pf}
\begin{cases}
-\Delta_{g} u=0 & \mbox{on}\ M^{\textrm {int}},\\  
u =f & \mbox{on}\ \p M.
\end{cases}
\end{equation}
The Dirichlet-to-Neumann map for \eqref{pf} is defined by
$$ \Lambda_{g}(f)=\p_\nu u|_{\p M},$$
where $\nu$ stands for the exterior unit normal vector field on $\p M$ and $u\in H^2(M)$ is the solution to \eqref{pf}. The geometric Calder\'{o}n problem can now be stated as follows: 

\begin{itemize}
	\item [(IP1)]{Does the knowledge of the Dirichlet-to-Neumann map $\Lambda_g$ uniquely determine the metric $g$? }
\end{itemize}
As noted by Luc Tartar (account given in \cite{KV}), there is a natural obstruction to uniqueness that arises from isometries. Namely, if $\Phi:M\to M$ is a diffeomorphism that fixes the boundary, i.e. $\Phi|_{\p M}=\textrm{Id}$, then there holds, $\Lambda_{\Phi^\star g}=\Lambda_{g}$. In dimension two, an additional obstruction arises due to the conformal invariance of the Laplace--Beltrami operator. The geometric Calder\'{o}n problem can thus be re-stated as the question of injectivity of the map $\Lambda_g$, up to the natural obstructions discussed above.

When $n=2$, the geometric Calder\'{o}n problem has been solved, see \cite{Na96} for the case of Riemannian metrics on bounded domains in the plane and \cite{LaUh} for the case of Riemannian surfaces. 
In the review of previous results below, we will restrict our attention to dimensions $n\geq 3$, where the problem has remained open to large extent. 

The works \cite{LLS,LTU,LaUh,LU} study the Calder\'{o}n problem on real-analytic manifolds with a real-analytic metric and an analogous result is proved in \cite{GS} for Einstein manifolds (which are real-analytic in their interior). 
Outside the category of real-analytic metrics, results are sparse and they are all related to a simpler variation of the anisotropic Calder\'{o}n problem where one assumes that the conformal class of the manifold is a priori known. In this case, the question becomes as follows:
\begin{itemize}
	\item [(IP2)]{If $\Lambda_{cg}=\Lambda_{g}$ for some smooth positive function $c$ on $M$, does it follow that $c=1$?}
\end{itemize} 
Before reviewing the results on (IP2), we mention a closely related inverse problem associated to the Schr\"{o}dinger operator on Riemannian manifolds. To this end, let $V \in L^{\infty}(M)$ and consider the equation
 \begin{equation}\label{pf_schrodinger}
\begin{cases}
(-\Delta_{g}+V) u=0 & \mbox{on}\ M^{\textrm {int}},\\  
 u =f & \mbox{on}\ \p M.
\end{cases} 
 \end{equation}
We assume that zero is not a Dirichlet eigenvalue of the operator $-\Delta_{g}+V$ and define the Dirichlet-to-Neumann map
$$\Lambda_{g,V}(f)=\p_\nu u|_{\p M},$$
where $u\in H^2(M)$ is the unique solution to \eqref{pf_schrodinger} with Dirichlet data $f\in H^{\frac{3}{2}}(\p M)$. The inverse problem reads:
	\begin{itemize}
		\item [(IP3)]{If $\Lambda_{g,V_1}=\Lambda_{g,V_2}$ for $V_1,V_2\in L^{\infty}(M)$, does it follow that $V_1=V_2$?}
	\end{itemize} 
It is well known that an affirmitive answer to (IP3) implies (IP2). This is based on the observation that
$$ -\Delta_{cg} u= c^{-\frac{n+2}{4}}(-\Delta_g +V)(c^{\frac{n-2}{4}}u)\quad \text{with} \quad V=c^{-\frac{n-2}{4}}\Delta_g(c^{\frac{n-2}{4}}).$$

The seminal work \cite{SU} solves both the problems (IP2) and (IP3) when $M$ is a domain in $\R^n$ and $g$ is the Euclidean metric. This is achieved via introduction of complex geometric optics solutions to Schr\"{o}dinger equations. The works \cite{DKSU,DKLS} generalize the idea of complex geometric optics further by studying (IP2)--(IP3) on the so-called {\em conformally transversally anisotropic} manifolds (CTA in short), namely manifolds that are embedded in a cylinder up to a conformal rescaling. That is, 
\begin{equation}\label{cta_def} 
M \subset \R\times M_0, \qquad g(t,x)= c(t,x) (dt^2+g_0(x)),
\end{equation}
where the transversal factor $(M_0,g_0)$ is a smooth Riemannian manifold with boundary. Let us also mention that a Riemannian manifold $(M,g)$ is called {\em transversally anisotropic} if it satisfies \eqref{cta_def} with $c=1$. The works \cite{DKSU,DKLS} solve (IP2)--(IP3) on CTA manifolds under the additional assumption that the geodesic ray transform on $(M_0,g_0)$ is injective. A constructive counterpart to the uniqueness proofs in \cite{DKSU,DKLS} is given in \cite{FKOU} for continuous potentials $V$. We also mention the works \cite{DKLLS,KLS} that study a linearized version of (IP2)--(IP3) on transversally anisotropic manifolds (recall that the Dirichlet-to-Neumann map $\Lambda_{g,V}$ depends nonlinearly on $V$), under the assumption that $(M_0,g_0)$ is real-analytic. Outside the category of CTA manifolds, we refer the reader to \cite{UW21} that studies a variation of the anisotropic Calder\'{o}n problem associated to the operator $-\Delta_g +V-\lambda^2$ for a fixed sufficiently large parameter $|\lambda|\gg\|V\|_{L^{\infty}(M)}$. Finally, we refer the reader to \cite{Gu} for a survey of the anisotropic Calder\'{o}n problem.

\subsection{Main results} Our first result concerns (IP2) on transversally anisotropic manifolds. 

\begin{theorem}
	\label{thm_ta_1}
	Let $(M,g)$ be a transversally anisotropic manifold of dimension $n\geq 3$ and assume that the transversal factor $(M_0,g_0)$ is nontrapping. Suppose that $c>0$ is a smooth function on $M$. If
	$$\Lambda_{cg} = \Lambda_g,$$  
	then $c=1$ on $M$.
\end{theorem}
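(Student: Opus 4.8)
The plan is to reduce (IP2) to the Schr\"{o}dinger problem (IP3) through the conjugation identity recorded above and then to exploit the special structure of the resulting potential. Writing $\rho=c^{\frac{n-2}{4}}$ and $V=\rho^{-1}\Delta_g\rho$, the conjugation identity shows that $u$ is $cg$-harmonic if and only if $\rho u$ solves $(-\Delta_g+V)(\rho u)=0$; in particular $\rho$ itself is a \emph{positive} solution of $(-\Delta_g+V)\rho=0$. The first step is boundary determination: the full symbol of $\Lambda_{cg}$ determines the Taylor expansion of $c$ at $\p M$, so $\Lambda_{cg}=\Lambda_g$ forces $c-1$ to vanish to infinite order on $\p M$. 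Consequently the boundary terms relating $\Lambda_{cg}$ and $\Lambda_{g,V}$ drop out and we obtain $\Lambda_{g,V}=\Lambda_{g,0}$, together with the fact that the positive solution $\rho$ has trivial Cauchy data $(\rho,\p_\nu\rho)|_{\p M}=(1,0)$. Pairing a Schr\"{o}dinger solution $u_1$ with a harmonic function $u_2$ in the standard way yields the integral identity
\begin{equation}
\int_M V\,u_1u_2\,dV_g=0 .
\end{equation}

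Next I would construct oscillating solutions adapted to the transversally anisotropic structure. After extending $c\equiv 1$ outside $M$ (licensed by boundary determination) and embedding into the cylinder $\R\times M_0$, the function $\varphi(t,x)=t$ is a limiting Carleman weight, and one builds complex geometric optics solutions $u_1=e^{-\tau(t+i\theta)}(v_\gamma+r_1)$ and $u_2=e^{\tau(t+i\theta)}(\overline{v_\gamma}+r_2)$, where $\theta$ is the harmonic conjugate weight and $v_\gamma$ is a Gaussian beam quasimode concentrating on a unit-speed geodesic $\gamma$ of $(M_0,g_0)$, additionally carrying an axial oscillation $e^{i\lambda t}$. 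The nontrapping assumption guarantees that such geodesics reach $\p M_0$ in finite length, so the quasimodes and the attendant Carleman and remainder estimates are available globally on $M_0$. Substituting these solutions into the integral identity and passing to the limit $\tau\to\infty$ localizes $V$ to the tube $\R_t\times\gamma$ and produces, for every $\lambda\in\R$ and every such geodesic, the vanishing of the axial Fourier transform of the geodesic ray transform of $V$, i.e.\ $\int_{\R}e^{-i\lambda t}\int_\gamma V(t,\gamma(s))\,ds\,dt=0$.

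The hard part is the final step, where the hypotheses depart from \cite{DKSU,DKLS}: since $(M_0,g_0)$ is only assumed nontrapping, its geodesic ray transform need not be injective, so the vanishing data above does not by itself force $V=0$. Here I would exploit the conformal structure. Writing $\psi=\frac{n-2}{4}\log c$, so that $V=\Delta_g\psi+|\nabla_g\psi|_g^2$ and $\rho=e^{\psi}$ is a positive solution of $(-\Delta_g+V)\rho=0$ with trivial Cauchy data, the aim is to convert the vanishing geodesic averages into the vanishing of a nonnegative quantity. I expect the decisive inputs to be the positivity of $\rho$ and the boundary conditions $\psi=\p_\nu\psi=0$: integrating by parts the divergence part $\Delta_g\psi$ and using the axial integration in $t$ should reduce matters to averages of $|\nabla_g\psi|_g^2$, after which an energy argument propagating from $\p M$, where $\psi=0$, along the geodesics of $(M_0,g_0)$ --- which by nontrapping meet every interior point --- should force $\psi\equiv 0$, i.e.\ $c\equiv 1$. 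The main obstacle, and the crux of the argument, is precisely this replacement of ray-transform injectivity by the positivity and gauge structure of the conformal potential, alongside the technical task of making the Gaussian-beam concentration and the $\tau,\lambda\to\infty$ limits rigorous uniformly over the family of geodesics.
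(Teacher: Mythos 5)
Your overall route coincides with the paper's: reduce to $\Lambda_{g,V}=\Lambda_{g,0}$ with $V=c^{-\frac{n-2}{4}}\Delta_g(c^{\frac{n-2}{4}})$, use the complex-geometric-optics machinery of \cite{DKLS} (their equation (3.6), valid on nontrapping transversal factors) to conclude that the axial Fourier transform $\hat V(\lambda,\cdot)$ has vanishing geodesic ray transform on $(M_0,g_0)$, and then, correctly recognizing that nontrapping does not give ray-transform injectivity, exploit the gauge structure of the conjugated potential. Up to that point you are right, and you even isolated the correct crux. (A minor remark: infinite-order boundary determination is unnecessary; $c|_{\p M}=1$ and $\p_\nu c|_{\p M}=0$ suffice for the reduction and for the final boundary conditions.)

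The closing step, however --- the only genuinely new step --- is left as a hope, and the mechanism you sketch would fail. You propose ``integrating by parts the divergence part $\Delta_g\psi$'' inside the geodesic averages; but along a single geodesic $\gamma$ of $M_0$ the quantity $\Delta_g\psi$ is not a total derivative in the arclength parameter (it contains transverse second derivatives), so no per-geodesic integration by parts produces averages of $|\nabla\psi|^2$, and the subsequent ``energy argument propagating from $\p M$ along geodesics'' is not an argument. The missing idea is Santal\'o's formula: take $\lambda=0$, so that $\int_\gamma \hat V(0,\gamma(s))\,ds=0$ for every inextendible geodesic, and integrate this family of identities over the unit sphere bundle of $M_0$ (the paper invokes \cite[Lemma 3.3]{GMT} for a version that does not require boundary convexity; nontrapping is exactly what makes this applicable). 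This yields the single scalar identity $\int_{M_0}\hat V(0,x)\,dV_{g_0}=0$, i.e.\ $\int_M V\,dV_g=0$. Only now does your divergence structure enter, and it enters globally rather than along rays: with $w=c^{\frac{n-2}{4}}=e^{\psi}$ and $\p_\nu w|_{\p M}=0$,
\begin{equation*}
0=\int_M \frac{\Delta_g w}{w}\,dV_g=\int_M w^{-2}\pair{\nabla w,\nabla w}\,dV_g=\int_M \pair{\nabla\psi,\nabla\psi}\,dV_g,
\end{equation*}
whence $\psi$ is constant and, vanishing on $\p M$, is identically zero --- no propagation along geodesics is needed. So the gap is exactly one concrete lemma, namely averaging the vanishing ray transforms via Santal\'o's formula to obtain $\int_M V\,dV_g=0$, which your sketch never supplies and which the rest of your outline cannot replace.
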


We now turn to (IP3). We show a rigidity result on transversally anisotropic manifolds at the zero potential.

\begin{theorem}
	\label{thm_ta_2}
	Let $(M,g)$ be a transversally anisotropic manifold of dimension $n\geq 3$ and assume that the transversal factor $(M_0,g_0)$ is nontrapping. Let $\delta\in (0,1)$ be small, depending only on $(M,g)$, and assume that $V$ is real valued and satisfies $\|V\|_{L^{\infty}(M)}\leq \delta$. If
	$$\Lambda_{g,V} = \Lambda_{g,0},$$ 
	then $V=0$ on $M$.
\end{theorem}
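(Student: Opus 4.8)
The plan is to run the standard boundary-integral argument and then isolate exactly where the extra hypotheses (nontrapping plus smallness) are needed. Since $\Lambda_{g,V}=\Lambda_{g,0}$, for any $u_1\in H^1(M)$ solving $(-\Delta_g+V)u_1=0$ and any harmonic $u_2\in H^1(M)$, pairing the two equations and integrating by parts twice produces
\[
\int_M V\, u_1 u_2\, dV_g = \int_{\partial M}\big((\partial_\nu u_1)\, u_2 - u_1\, \partial_\nu u_2\big)\, dS = 0,
\]
where the boundary term vanishes precisely because the two Dirichlet-to-Neumann maps agree and are self-adjoint. I would also record a preliminary boundary determination, so that $V$ may be regarded as extended by zero in the Euclidean variable $t$.

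Next I would feed complex geometric optics / Gaussian beam solutions into this identity. On $M\subset \R\times M_0$ with $g=dt^2+g_0$, I take $t$ as a limiting Carleman weight and build $u_1,u_2$ of WKB form $e^{\pm(\tau+i\lambda)t}(a+r)$ whose amplitudes $a$ concentrate, as $\tau\to\infty$, on a fixed transversal geodesic $\gamma_0\subset M_0$; the \emph{nontrapping} hypothesis is used here to guarantee that such beams exist globally, entering and exiting through $\partial M$. Because the amplitudes and remainders for the $V$-equation differ from their harmonic counterparts only at order $O(1/\tau)$, the limit in $\int_M V u_1u_2=0$ is clean and yields, for every $\gamma_0$ and every $\lambda\in\R$, the vanishing of a frequency-weighted ray transform $\int_\R e^{-2i\lambda t}\big(\int_{\gamma_0}V(t,\cdot)\big)\,dt=0$. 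Letting $\lambda$ range over $\R$ and using injectivity of the Fourier transform in $t$, this forces the geodesic ray transform of $V(t,\cdot)$ on $(M_0,g_0)$ to vanish for almost every $t$.

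The main obstacle is that, since $(M_0,g_0)$ is only assumed nontrapping and not simple, the geodesic ray transform need not be injective, so the step above merely places $V(t,\cdot)$ in its kernel. This is exactly where the smallness hypothesis and the convexity of the range of $\Lambda_{g,\cdot}$ must enter. I would exploit the variational identity that, for harmonic $u_0$ with boundary data $f$ and the corresponding solution $u_V$ of the $V$-equation with the \emph{same} data, the difference $w=u_V-u_0$ has vanishing Cauchy data (both traces vanish because $\Lambda_{g,V}=\Lambda_{g,0}$) and satisfies
\[
\int_M V\, u_0^2\, dV_g = \int_M \big(|\nabla w|^2 + V w^2\big)\, dV_g, \qquad \int_M V\, u_V^2\, dV_g = -\int_M |\nabla w|^2\, dV_g.
\]
For $\norm{V}_{L^\infty(M)}\leq \delta$ small the right-hand sides are coercive in $w$; this is the analytic content of the concavity of $s\mapsto \pair{\Lambda_{g,sV}f,f}$ and of the induced form inequalities $\Lambda_{g,sV}\geq \Lambda_{g,0}$ for $s\in[0,1]$, with equality at the endpoints $s=0,1$.

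The delicate point, which I expect to be the crux, is to reconcile the vanishing ray transform with these sign conditions so as to kill the kernel contribution. The coercive identity gives $\int_M V u_0^2\, dV_g \geq c\,\norm{V u_0}_{H^{-1}(M)}^2$, which is strictly positive for any harmonic $u_0\not\equiv 0$ once $V\not\equiv 0$ (since $u_0$ has full support by unique continuation); thus $V=0$ would follow from producing a single nonzero harmonic $u_0$ with $\int_M V u_0^2\leq 0$. The vanishing ray transform furnishes concentrating beams along which $\int_M V u_0^2\to 0$, but each such integral is itself positive, and the positive correction $\int_M|\nabla w|^2$ lives at a transversal scale washed out by concentration. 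I would therefore argue globally along the segment $s\mapsto sV$, where smallness of $\delta$ keeps the entire family in the coercive regime, and track how the higher-order-in-$\norm{V}$ terms of the integral identity reproduce ray transforms of the nonlinear self-interactions of $V$; combining these with the strict convexity is what I expect to force the kernel part to vanish. Verifying that this interaction genuinely closes once $\delta$ is small enough is the step requiring the most care.
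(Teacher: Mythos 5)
There is a genuine gap, and it sits exactly at the point you yourself flag as ``the crux.'' Your closing step --- running coercivity along the segment $s\mapsto sV$ and hoping that higher-order terms of the integral identity, combined with strict convexity, kill the kernel contribution of the ray transform --- is never carried out, and nothing in the proposal indicates it can be: on a merely nontrapping $(M_0,g_0)$ the kernel of the geodesic ray transform may be large, and your beam solutions only give $\int_M Vu_0^2\,dV_g\to 0$ asymptotically, never the exact inequality $\int_M Vu_0^2\,dV_g\leq 0$ that your own coercivity criterion requires. The paper's resolution is different and much simpler: it never needs injectivity of the ray transform at all. From the quasimode identity of \cite[eq.\ (3.6)]{DKLS} it keeps only the $\lambda=0$ moment, $\int_I \hat V(0,\gamma(s))\,ds=0$ along every inextendible geodesic $\gamma$ of $M_0$, and then \emph{averages} this over the whole unit sphere bundle via Santal\'o's formula (valid for nontrapping transversal factors without convexity assumptions, \cite[Lemma 3.3]{GMT}), obtaining the single scalar identity $\int_M V\,dV_g=0$. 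This averaging step is the missing idea in your proposal. A smaller inaccuracy: for $\lambda\neq 0$ what \cite{DKLS} yields is the \emph{attenuated} transform $\int_I e^{-2\lambda s}\hat V(2\lambda,\gamma(s))\,ds=0$, not the plain transform, so your assertion that the ray transform of $V(t,\cdot)$ vanishes for a.e.\ $t$ is unjustified --- though immaterial, since only $\lambda=0$ is used.

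Once $\int_M V\,dV_g=0$ is known, your own identities finish the proof in one line, and the entire speculative last paragraph becomes unnecessary. Apply your identity $\int_M Vu_0^2\,dV_g=\int_M\bigl(|\nabla w|^2+Vw^2\bigr)\,dV_g$ to the harmonic function $u_0\equiv 1$ (boundary datum $f=1$): the left-hand side equals $\int_M V\,dV_g=0$, while for $\|V\|_{L^\infty(M)}\leq\delta$ small the right-hand side is bounded below by $(1-C\delta)\int_M|\nabla w|^2\,dV_g$ by the Poincar\'e inequality, since $w\in H^1_0(M)$. Hence $w=0$, so the solution with datum $f=1$ is identically $1$, and the equation $-\Delta_g u+Vu=0$ forces $V=0$. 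This is precisely the paper's argument, written there as the expansion $u=1+\dot u+r$ culminating in the identity \eqref{identity_esp}; your coercivity identity is the same computation packaged in one step. In short: you have the correct quadratic-form mechanism and correctly diagnosed that nontrapping does not give injectivity of the ray transform, but without the Santal\'o averaging that converts the geodesic information into $\int_M V\,dV_g=0$, the argument does not close.
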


Next, we discuss a convexity result for the range of the Dirichlet-to-Neumann maps associated to Schr\"{o}dinger equations of the form \eqref{pf_schrodinger} on a general Riemannian manifold, assuming that the potential $V$ is sufficiently small.  We prove the following theorem.

\begin{theorem}
	\label{thm_convex}
	Let $(M,g)$ be a smooth compact Riemannian manifold of dimension $n\geq 2$ with boundary. Let $\delta\in (0,1)$ be small depending only on $(M,g)$. For $j=1,2$, let $V_j\in L^{\infty}(M)$ be real valued and satisfy $\|V_j\|_{L^{\infty}(M)}\leq \delta$. There holds,
	$$ \Lambda_{g,(1-t)V_1+ tV_2} \geq (1-t)\,\Lambda_{g,V_1}+t\,\Lambda_{g,V_2}\quad \forall\, t\in [0,1].$$
 Moreover, for any $t \in (0,1)$, the above inequality is strict  unless $V_1=V_2$ on $M$.
\end{theorem}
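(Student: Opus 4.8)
The plan is to realize the quadratic form associated with each Dirichlet-to-Neumann map as the minimum of an energy functional that depends \emph{affinely} on the potential, and then to exploit the elementary fact that a pointwise minimum of affine functions is concave. No transversal structure is needed; only the variational theory of \eqref{pf_schrodinger} together with standard elliptic estimates enters.

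First I would record the variational characterization. Write $V_t=(1-t)V_1+tV_2$, fix Dirichlet data $f$, and for $w\in H^1(M)$ with $w|_{\p M}=f$ set
\[
E_V(w)=\int_M\left(|\nabla_g w|^2+V\,|w|^2\right)dV_g .
\]
Choosing $\delta$ smaller than the first Dirichlet eigenvalue of $-\Delta_g$ (a constant depending only on $(M,g)$), the Poincar\'{e} inequality makes $E_V$ coercive and strictly convex on the affine space $\{w|_{\p M}=f\}$ whenever $\|V\|_{L^\infty(M)}\le\delta$; in particular zero is not a Dirichlet eigenvalue, so $\Lambda_{g,V}$ is well defined. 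The functional then has a unique minimizer, which is exactly the solution $u=u_V^{(f)}$ of \eqref{pf_schrodinger}, and Green's identity gives
\[
\langle\Lambda_{g,V}f,f\rangle_{\p M}=E_V\bigl(u_V^{(f)}\bigr)=\min_{w|_{\p M}=f}E_V(w).
\]

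Next I would prove the inequality. For fixed $w$ the map $V\mapsto E_V(w)$ is affine, so with $u_t:=u_{V_t}^{(f)}$,
\[
\langle\Lambda_{g,V_t}f,f\rangle_{\p M}=E_{V_t}(u_t)=(1-t)E_{V_1}(u_t)+t\,E_{V_2}(u_t)\ge(1-t)E_{V_1}\bigl(u_{V_1}^{(f)}\bigr)+t\,E_{V_2}\bigl(u_{V_2}^{(f)}\bigr),
\]
where the last step uses $E_{V_j}(u_t)\ge\min E_{V_j}=E_{V_j}(u_{V_j}^{(f)})$. The right-hand side equals $(1-t)\langle\Lambda_{g,V_1}f,f\rangle_{\p M}+t\langle\Lambda_{g,V_2}f,f\rangle_{\p M}$, which is the asserted form inequality, valid for every $f$. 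Tracking the equality case and using strict convexity of each $E_{V_j}$, equality holds for a given $f$ if and only if $u_t=u_{V_1}^{(f)}=u_{V_2}^{(f)}$, i.e. the two potentials produce the \emph{same} solution for that datum.

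Finally, the strictness, where I expect the main work and where the smallness of $\delta$ is essential. Suppose $V_1\ne V_2$ and test with the constant datum $f\equiv 1$. Writing $u_{V_j}^{(1)}=1+v_j$, the correction solves $(-\Delta_g+V_j)v_j=-V_j$ with zero boundary values, so elliptic $L^\infty$ estimates (with constants depending only on $(M,g)$, valid in every dimension since $V_j\in L^\infty\subset L^q$ for $q>n/2$) give $\|v_j\|_{L^\infty(M)}\le C\delta$; after shrinking $\delta$ we may ensure $u_{V_j}^{(1)}\ge 1/2>0$ on $M$. If equality held in the convexity inequality for $f\equiv 1$, the previous step would produce a common positive solution $u=u_{V_1}^{(1)}=u_{V_2}^{(1)}$; subtracting the two equations it satisfies yields $(V_1-V_2)\,u=0$ almost everywhere, and positivity of $u$ forces $V_1=V_2$, a contradiction. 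Hence the inequality is strict already at $f\equiv 1$, proving strictness of the form inequality whenever $V_1\ne V_2$. The only genuinely delicate point is securing a nowhere-vanishing solution; the positivity of $u_{V_j}^{(1)}$, a consequence of the smallness assumption, supplies it directly.
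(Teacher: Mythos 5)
Your proof is correct, but it follows a genuinely different route from the paper. The paper fixes $f$, sets $H_f(t)=\int_{\p M} f\,\Lambda_{g,(1-t)V_1+tV_2}(f)\,dV_g$, expands the solution as $u_t=u_0+tv+t^2r_t$, computes $H_f$ explicitly by integration by parts, and shows $H''_f(t)\leq -\int_M|\nabla v|^2\,dV_g\leq 0$ after absorbing the remainder terms ($r_t$, $\dot r_t$, $\ddot r_t$) with elliptic estimates and Poincar\'e's inequality; concavity of $H_f$ then gives \eqref{claim_convexity}. You instead observe that $\pair{\Lambda_{g,V}f,f}=\min\{E_V(w): w|_{\p M}=f\}$ with $E_V$ affine in $V$, so the quadratic form is a pointwise minimum of affine functions of $V$ and hence concave --- the inequality follows with no perturbative expansion and no remainder estimates at all. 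This is cleaner and in fact needs the smallness of $\delta$ only to guarantee coercivity ($\delta$ below the first Dirichlet eigenvalue), whereas the paper's second-derivative bookkeeping uses $\delta$ again to absorb the $r_t$ terms. Your exact equality-case characterization ($u_t=u_{V_1}^{(f)}=u_{V_2}^{(f)}$ via strict convexity) replaces the paper's conclusion from $H''_f(t_0)=0$ that $v=0$.

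One nuance on strictness: the paper shows that $H''_f(t_0)=0$ for \emph{any} nontrivial $f$ forces $V_1=V_2$, i.e.\ the difference of the two sides is positive definite on all nonzero boundary data when $V_1\neq V_2$; your argument, by testing only at $f\equiv 1$, establishes strictness at that single datum, i.e.\ that the operator inequality is not an equality. This weaker form still suffices for the deformation-rigidity corollary, and your choice of $f\equiv 1$ with the positivity bound $u\geq 1/2$ elegantly sidesteps a point the paper glosses over: for general nontrivial $f$, passing from $(V_1-V_2)u_0=0$ to $V_1=V_2$ requires knowing that a nontrivial solution of $(-\Delta_g+V_1)u_0=0$ cannot vanish on a set of positive measure, a unique continuation fact implicit in the paper's final step. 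If you want the full positive-definite version, your equality-case analysis extends verbatim to arbitrary nontrivial $f$ once that unique continuation ingredient is invoked.
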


The inequality in the theorem is in the sense of positive semidefinite operators, that is,
\begin{equation}
\label{claim_convexity}
\int_{\p M} f\Lambda_{g,(1-t)V_1+tV_2}(f)\,dV_g\geq (1-t)\int_{\p M} f\Lambda_{g,V_1}(f)\,dV_g+t\int_{\p M} f\Lambda_{g,V_2}(f)\,dV_g,
\end{equation}
for all $t\in [0,1]$ and all $f\in H^{\frac{3}{2}}(\p M)$.

As a consequence of the theorem the following deformation rigidity holds.

\begin{corollary}
	Let $(M,g)$ be a smooth compact Riemannian manifold of dimension $n\geq 2$ with boundary. Let $V\in L^{\infty}(M)$ be real valued. If $\Lambda_{g,tV} = \Lambda_{g,0}$ for all small $t>0$, then $V = 0$ on $M$.
\end{corollary}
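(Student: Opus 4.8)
The plan is to deduce the corollary directly from the strict convexity in Theorem~\ref{thm_convex}, by feeding that theorem a degenerate pair of potentials built from $V$ and the zero potential. First I would fix $s>0$ small enough that $\|sV\|_{L^{\infty}(M)}\leq\delta$, so that both $sV$ and every rescaling $tsV$ with $t\in[0,1]$ lie in the regime where Theorem~\ref{thm_convex} applies (in particular $0$ remains a non-eigenvalue, so all the relevant Dirichlet-to-Neumann maps are well defined). The hypothesis $\Lambda_{g,\tau V}=\Lambda_{g,0}$ for all small $\tau>0$ then yields, in particular, $\Lambda_{g,sV}=\Lambda_{g,0}$ together with $\Lambda_{g,tsV}=\Lambda_{g,0}$ for every $t\in[0,1]$.

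Next I would apply the convexity inequality of Theorem~\ref{thm_convex} with the choice $V_1=0$ and $V_2=sV$. Since $(1-t)V_1+tV_2=tsV$, the inequality reads
\[
\Lambda_{g,tsV}\;\geq\;(1-t)\,\Lambda_{g,0}+t\,\Lambda_{g,sV}\qquad\forall\,t\in[0,1].
\]
Substituting the three identities from the previous paragraph, both sides collapse to $\Lambda_{g,0}$, so the inequality is in fact an \emph{equality} for every $t\in(0,1)$; equivalently, the quadratic form in \eqref{claim_convexity} vanishes identically for this pair. The strictness clause of Theorem~\ref{thm_convex} asserts that for $t\in(0,1)$ the inequality is strict unless $V_1=V_2$. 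Hence the equality we have just produced forces $0=V_1=V_2=sV$, and since $s>0$ this gives $V=0$ on $M$.

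There is essentially no analytic obstacle at this stage: all of the substance has been absorbed into Theorem~\ref{thm_convex}, and the corollary is a one-line consequence of its rigidity part. The only point requiring genuine care is the elementary bookkeeping that the rescaled potentials $sV$ and $tsV$ stay within the smallness threshold $\delta$, so that the theorem is legitimately applicable; note also that the hypothesis is only needed for arbitrarily small $t>0$, since a single sufficiently small $s$ already triggers the strict-convexity contradiction.
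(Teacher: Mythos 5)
Your proof is correct and is essentially the paper's intended argument: the paper gives no separate proof of the corollary, stating it as a direct consequence of Theorem~\ref{thm_convex}, and your instantiation $V_1=0$, $V_2=sV$ with $s$ small enough that $\|sV\|_{L^\infty(M)}\leq\delta$ and $\Lambda_{g,tsV}=\Lambda_{g,0}$ for all $t\in[0,1]$ is exactly the right way to invoke the strictness clause. One small caution about your closing remark: equality at an interior point $t\in(0,1)$ requires the hypothesis at \emph{two} distinct small parameters ($sV$ and $tsV$), not at a single $s$ alone, but your main argument uses the hypothesis correctly in this way.
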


We also derive some results for inverse problems associated to  semilinear elliptic equations on general Riemannian manifolds. This is an area of research that has received considerable attention lately. To introduce this class of inverse problems in a general framework, let $F\in C^{\infty}(M\times \R)$ satisfy 
\begin{itemize}
\item[(A1)]{$F(x,t_k)=0$ for some sequence $t_k \in \R$, $k\in J$},
\item[(A2)]{$0$ is not a Dirichlet eigenvalue of $-\Delta_g +\p_s F(x,s)|_{s=t_k}$, for any $k\in J$,}
\end{itemize}
where $J$ is a countable index set that could possibly have a single, or finite or infinite elements. We consider the semilinear equation
\begin{equation}\label{pf_nonlinear}
\begin{cases}
-\Delta_{g} u+F(x,u)=0 & \mbox{on}\ M^{\textrm {int}},\\  
u =t_k+f & \mbox{on}\ \p M.
\end{cases}
\end{equation}
It is well known, see for e.g. \cite{FO20,LLLS21a} that subject to the conditions (A1)--(A2), the above semilinear elliptic PDE is well posed in the sense that given each $f$ sufficiently close to zero, there is a unique solution which is close to $t_k$. Precisely, given any $\alpha \in (0,1)$ and $k \in J$, there exists $\delta_{k,0}, \delta_{k,1}>0$ such that if 
\begin{equation}
\label{f_assumption}
f\in C^{2,\alpha}(\p M)\quad \text{satisfies}\quad  \|f\|_{C^{2,\alpha}(\p M)}\leq \delta_{k,0},
\end{equation}
then there exists a unique solution $u\in C^{2,\alpha}(M)$ to \eqref{pf_nonlinear} under the additional imposition that
\begin{equation}\label{u_impose}
\|u-t_k\|_{C^{2,\alpha}(M)}\leq \delta_{k,1}.\end{equation}
Moreover, the dependence of $u$ on $f$ is continuous. We may thus define a Dirichlet-to-Neumann map associated to \eqref{pf_nonlinear} by
\begin{equation}\label{DN_nonlinear} 
\Lambda_{g,F}(t_k+f) = \p_\nu u|_{\p M},\end{equation}
where it is to be understood that $k\in J$, $f$ satisfies \eqref{f_assumption} and that $u$ is the unique solution to \eqref{pf_nonlinear} satisfying \eqref{u_impose}. We may now formulate an inverse problem associated to the semilinear equation \eqref{pf_nonlinear}.
	\begin{itemize}
		\item [(IP4)]{For $j=1,2$, let $(M,g_j)$ be a smooth compact Riemannian manifold with boundary. Let $F_1,F_2\in C^{\infty}(M\times \R)$ satisfy (A1)--(A2). If $$\Lambda_{g_1,F_1}(t_k+f)=\Lambda_{g_2,F_2}(t_k+f),$$ 
		for all $k\in J$ and all $f$ satisfying \eqref{f_assumption}, does it follow that $F_1=F_2$ on $M\times \R$ and that there exists a diffeomorphism $\Phi:M\to M$ that fixes the boundary and satisfies $\Phi^\star g_2=g_1$ on $M$?}
	\end{itemize} 

In the case that $g_1 = g_2 = g$ and $(M,g)$ is an Euclidean domain,
this inverse problem has been actively studied over the last couple of decades, see for example \cite{IN95,IS94,Sun04,Sun10,SU97}. Recently, based on the higher order linearization method of \cite{KLU}, introduced in the context of nonlinear hyperbolic equations, new results have appeared also in the case that the manifolds $(M,g_j)$, $j=1,2$, are conformally transversally anisotropic, see \cite{FO20,KU20b,KU20c,LLLS21a,LLLS21b,LLST} and the references therein. In this work, we present results for general Riemannian manifolds. Our first result regarding (IP4) says that the linear case is rigid among nonlinear perturbations. 

\begin{theorem}
	\label{thm_main_0}
	Let $m\geq 2$ be an integer and let $(M,g)$ be a smooth compact Riemannian manifold of dimension $n\geq 2$ with boundary. Let $F(x,s) = V(x)s^m$ for some real valued $V \in C^{\infty}(M)$ and assume that
	$$\Lambda_{g,F}(f)= \Lambda_{g,0}(f),$$
	for all $f\in C^{2,\alpha}(\p M)$ satisfying \eqref{f_assumption} with $J=\{1\}$ and $t_1=0$. Then $F=0$ on $M\times \R$.
\end{theorem}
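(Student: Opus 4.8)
The plan is to use the $m$-th order linearization of the Dirichlet-to-Neumann map at the trivial solution, reduce the hypothesis to an integral identity involving products of $m+1$ harmonic functions, and then close the argument with a density statement for such products.

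First I would set up the linearization. Fix $f_1,\dots,f_m\in C^{2,\alpha}(\p M)$ and, for $\epsilon=(\epsilon_1,\dots,\epsilon_m)$ small, let $u_\epsilon\in C^{2,\alpha}(M)$ be the unique small solution of $-\Delta_g u_\epsilon+V u_\epsilon^m=0$ with $u_\epsilon|_{\p M}=\sum_j \epsilon_j f_j$; here $t_1=0$ and $u_0\equiv 0$ since $F(x,0)=0$. The well-posedness and continuous dependence recalled before \eqref{DN_nonlinear}, upgraded to smooth dependence on $\epsilon$ via the implicit function theorem, let me differentiate in $\epsilon$. Writing $v_j:=\p_{\epsilon_j}u_\epsilon|_{\epsilon=0}$ and using $m\ge 2$ so that $\p_u(Vu^m)|_{u=0}=0$, each $v_j$ is harmonic with $v_j|_{\p M}=f_j$. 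Setting $w:=\p_{\epsilon_1}\cdots\p_{\epsilon_m}u_\epsilon|_{\epsilon=0}$ and applying the $m$-fold mixed derivative to the equation, every term of $\p_{\epsilon_1}\cdots\p_{\epsilon_m}(u_\epsilon^m)$ that carries a leftover factor of $u_0=0$ drops out, leaving only the perfect-matching term $m!\,v_1\cdots v_m$. Hence $-\Delta_g w=-m!\,V\,v_1\cdots v_m$ with $w|_{\p M}=0$.

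Next I would extract the boundary information. Since $-\Delta_g u=0$ is linear, $\Lambda_{g,0}(\sum_j\epsilon_j f_j)=\sum_j\epsilon_j\,\Lambda_{g,0}(f_j)$ is linear in $\epsilon$, so its $m$-th mixed $\epsilon$-derivative vanishes for $m\ge 2$; on the other hand the same derivative of $\Lambda_{g,F}(\sum_j\epsilon_j f_j)=\p_\nu u_\epsilon|_{\p M}$ equals $\p_\nu w|_{\p M}$. The hypothesis $\Lambda_{g,F}=\Lambda_{g,0}$ therefore gives $\p_\nu w|_{\p M}=0$, so $w$ has vanishing Cauchy data. Testing $\Delta_g w$ against an arbitrary harmonic $v_0\in C^{2,\alpha}(M)$ and using Green's identity, the boundary terms vanish and $\int_M v_0\,\Delta_g w\,dV_g=0$, which yields the key identity
\begin{equation*}
\int_M V\,v_0 v_1\cdots v_m\,dV_g=0\qquad\text{for all harmonic } v_0,\dots,v_m\in C^{2,\alpha}(M).
\end{equation*}

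It then remains to show that this forces $V=0$, i.e. that the span of products of $m+1$ harmonic functions is dense in $L^1(M)$ (equivalently, by polarization, that $\int_M V v^{m+1}\,dV_g=0$ for all harmonic $v$ implies $V=0$). This density step is the heart of the matter and the main obstacle, and it is exactly where $m\ge 2$, i.e. having at least three factors, is essential: for two factors the analogous statement is the linearized anisotropic Calder\'on problem, which is not known on a general manifold. The natural approach is local, since every interior point has a conformally transversally anisotropic neighborhood in which one can construct local complex geometric optics harmonic functions whose pairwise products are bounded and oscillatory; one would transplant these to global harmonic functions by Runge approximation and use the remaining factor together with a stationary-phase analysis to localize the integral at the point and read off the local Fourier transform of $V$. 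The delicate point---and the crux of the whole proof---is to control the contribution of the Runge-corrected solutions away from the concentration point, since on a general (non-analytic, non-CTA) manifold global harmonic functions cannot themselves be made to concentrate; it is precisely the additional freedom afforded by the third factor when $m\ge 2$ that one must exploit to overcome this.
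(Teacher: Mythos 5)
Your setup is correct as far as it goes: the $m$-fold linearization at $t_1=0$ is legitimate, and it does produce the integral identity $\int_M V\,v_0v_1\cdots v_m\,dV_g=0$ for all harmonic $v_0,\dots,v_m$. But the proof then rests entirely on the density step that you yourself flag as ``the crux,'' and that step is a genuine gap, not a detail: on a \emph{general} smooth compact Riemannian manifold, density of (spans of) products of harmonic functions --- whether two, three, or $m+1$ factors --- is not a known result. Every existing implementation of the strategy you sketch (complex geometric optics products, Runge transplantation, stationary phase) requires the conformally transversally anisotropic structure or real-analyticity precisely in order to build the concentrating quasimodes; this is why the literature you would be invoking (\cite{FO20,LLLS21a,KU20b}) is confined to CTA geometries, and why the theorem at hand, stated for arbitrary $(M,g)$, is of interest. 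Your sketch gives no mechanism for controlling the Runge-corrected solutions away from the concentration point, and the remark that the third factor ``must be exploited'' is an aspiration, not an argument. As written, the proposal reduces the theorem to an open problem rather than proving it.

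The paper's proof avoids product density altogether, and the contrast is instructive. It uses only the \emph{constant} boundary data $f=\varepsilon$: the solution expands as $u_\varepsilon=\varepsilon+\varepsilon^m v_m+O(|\varepsilon|^{m+1})$ where $-\Delta_g v_m+V=0$ in $M^{\textrm{int}}$ and $v_m|_{\p M}=0$. The hypothesis forces $\p_\nu u_\varepsilon|_{\p M}=0$, so Green's identity gives $0=\int_M V u_\varepsilon^{m+1}\,dV_g+\int_M \pair{\nabla u_\varepsilon,\nabla u_\varepsilon}\,dV_g$. Matching powers of $\varepsilon$, the order $\varepsilon^{m+1}$ term yields $\int_M V\,dV_g=0$ (the only consequence your identity recovers, via $v_0=\cdots=v_m=1$), while the order $\varepsilon^{2m}$ term --- information \emph{beyond} the $m$-th linearization, coming from the Taylor expansion of the full nonlinear quadratic form --- yields $\int_M|\nabla v_m|^2\,dV_g+(m+1)\int_M V v_m\,dV_g=0$. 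Subtracting the energy identity \eqref{v_m_iden} for $v_m$ leaves $\int_M|\nabla v_m|^2\,dV_g=0$, whence $v_m\equiv 0$ and then $V=\Delta_g v_m\equiv 0$. In short, positivity of the Dirichlet energy substitutes for the density of products of harmonic functions; the special structure $F=Vs^m$ and the comparison with the zero potential make this sign argument available, and no CGO-type construction is needed. If you want to salvage your approach, the fix is not to prove density but to keep the expansion of $\int_{\p M}u_\varepsilon\,\p_\nu u_\varepsilon\,dV_g$ to order $\varepsilon^{2m}$ as the paper does.
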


Finally, we state a result on recovery of a general Riemannian manifold, assuming a nonlinearity of the form $F(x,s)=s\,G(s)$ for $x\in M$ and $s\in \R$. Here $G$ is a smooth, a priori known function that satisfies 
\begin{itemize}
	\item[(H1)]{There exists a strictly monotone sequence $\{t_kG'(t_k)\}_{k\in\N}\subset \R$ such that $G(t_k)=0$, $t_kG'(t_k)>1$ for all $k\in \N$ and $$\sum_{k=1}^{\infty}\frac{1}{t_k\,G'(t_k)}=\infty.$$}
\end{itemize}
It is straightforward to see that $F$ satisfies (A1)--(A2) in this case. As an example note that $G(s)=\sin(s)$ satisfies (H1). We prove the following theorem.
\begin{theorem}
	\label{thm_main}
	For $j=1,2$, let $(M,g_{j})$ be smooth compact Riemannian manifold of dimension $n\geq 2$ with boundary. Let $F\in C^{\infty}(M\times \R)$ be defined via $F(x,s)=sG(s)$ where $G$ is a smooth function on $\R$  satisfying (H1). If
	$$ \Lambda_{g_1,F}(f)=\Lambda_{g_2,F}(f),$$
	for all $f$ satisfying \eqref{f_assumption} with $J=\N$ and $\{t_k\}_{k\in \N}$ as given by (H1), then there exists a diffeomorphism $\Phi:M\to M$ that fixes the boundary and satisfies
	$$ g_{1}= \Phi^{\star}g_{2}\quad \text{on $M$}.$$ 
\end{theorem}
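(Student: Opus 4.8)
The plan is to reduce the nonlinear problem to the \emph{linear} Schr\"odinger problem at a sequence of spectral parameters by a single linearization, to upgrade the resulting discrete family of equalities into an identity in the spectral parameter using the divergence condition in (H1) together with a Blaschke-type argument, and finally to invoke the fixed-frequency rigidity result of \cite{UW21}.

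First I would linearize \eqref{pf_nonlinear} at the constant solution $u\equiv t_k$. Since $G(t_k)=0$, the constant $t_k$ solves the equation with boundary value $t_k$, and $\p_sF(x,s)|_{s=t_k}=G(t_k)+t_kG'(t_k)=t_kG'(t_k)=:\lambda_k$. Writing $u_\epsilon$ for the solution with data $t_k+\epsilon h$ and differentiating at $\epsilon=0$, the function $v=\p_\epsilon u_\epsilon|_{\epsilon=0}$ solves $(-\Delta_{g_j}+\lambda_k)v=0$ with $v|_{\p M}=h$, so that the Fr\'echet derivative of $\Lambda_{g_j,F}$ at $t_k$ is the Dirichlet-to-Neumann map of $-\Delta_{g_j}+\lambda_k$. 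Consequently the hypothesis $\Lambda_{g_1,F}=\Lambda_{g_2,F}$ yields
$$\Lambda_{g_1,\lambda_k}=\Lambda_{g_2,\lambda_k}\qquad\text{for every }k\in\N,$$
where $\Lambda_{g_j,\lambda}$ denotes the Dirichlet-to-Neumann map of $-\Delta_{g_j}+\lambda$ with constant potential $\lambda$. By (H1) the numbers $\lambda_k=t_kG'(t_k)>1$ are strictly monotone and satisfy $\sum_k\lambda_k^{-1}=\infty$.

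Next I would regard $\lambda\mapsto\Lambda_{g_j,\lambda}$ as an operator-valued function, meromorphic on $\C$ with poles only at $\lambda=-\mu$ for $\mu$ in the (positive) Dirichlet spectrum of $-\Delta_{g_j}$; in particular it is holomorphic on the right half-plane $\{\operatorname{Re}\lambda>0\}$, since $-\lambda$ cannot be a positive Dirichlet eigenvalue there. Fixing $f,\tilde f\in H^{3/2}(\p M)$, consider the scalar holomorphic function
$$\phi(\lambda)=\int_{\p M}\tilde f\,\big(\Lambda_{g_1,\lambda}-\Lambda_{g_2,\lambda}\big)f\,dV_g,$$
which vanishes at every $\lambda_k$. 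Resolvent estimates give polynomial growth $\norm{\Lambda_{g_j,\lambda}}\lesssim(1+|\lambda|)^{1/2}$ on $\{\operatorname{Re}\lambda>0\}$, so that $\psi(\lambda):=\phi(\lambda)/(\lambda+1)$ is bounded and holomorphic there. As the $\lambda_k$ are real one has $\sum_k\operatorname{Re}\lambda_k/(1+|\lambda_k|^2)\asymp\sum_k\lambda_k^{-1}=\infty$, so the Blaschke condition for the right half-plane fails at the zeros $\{\lambda_k\}$ of $\psi$, forcing $\psi\equiv0$ and hence $\phi\equiv0$. Since $f,\tilde f$ are arbitrary, $\Lambda_{g_1,\lambda}=\Lambda_{g_2,\lambda}$ on $\{\operatorname{Re}\lambda>0\}$, and by meromorphic continuation on all of $\C$ away from the (thus necessarily common) Dirichlet spectra. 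If instead the $\lambda_k$ were bounded they would accumulate and the identity theorem would apply at once; the role of (H1) is precisely to handle the case $\lambda_k\to\infty$.

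Finally I would specialize to $\lambda=-\lambda_0^2$ for a large $\lambda_0$ chosen so that $\lambda_0^2$ is not a Dirichlet eigenvalue of either metric. Then the Dirichlet-to-Neumann maps of $-\Delta_{g_j}-\lambda_0^2$ coincide, and the fixed-frequency rigidity result of \cite{UW21} (with $V=0$) produces a boundary-fixing diffeomorphism $\Phi:M\to M$ with $g_1=\Phi^\star g_2$, as claimed. I expect the main obstacle to be the middle step: one must justify rigorously that $\Lambda_{g_j,\lambda}$ is genuinely operator-holomorphic on the right half-plane with the stated growth and with no spurious singularities up to the imaginary axis, so that the complex-analytic vanishing theorem applies to $\psi$; once this framework is in place, the divergence $\sum_k\lambda_k^{-1}=\infty$ supplied by (H1) is exactly what defeats the Blaschke condition and delivers the identity in $\lambda$.
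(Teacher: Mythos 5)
Your first two steps track the paper's own proof closely: the paper likewise linearizes at the constant solutions $t_k$ to obtain equality of the Dirichlet-to-Neumann maps for $-\Delta_{g_j}+t_kG'(t_k)$, and likewise upgrades this discrete family of equalities to equality of the DN maps of $(-\Delta_{g_j}+z)$ for \emph{all} $z$ in the right half-plane $\mathbb H$ via a Blaschke argument, with the divergence in (H1) playing exactly the role you assign it. The one technical difference in the middle step is how boundedness on $\mathbb H$ is secured: you assert a resolvent growth bound $\norm{\Lambda_{g_j,\lambda}}\lesssim (1+|\lambda|)^{1/2}$ and divide by $(\lambda+1)$, whereas the paper, following \cite{eric}, anchors at $\mu=t_1G'(t_1)$ and writes $\p_{\nu_j}u_z^{(j)}-\p_{\nu_j}u_\mu^{(j)}=(z-\mu)\,\zeta_f^{(j)}(z)$, where $\zeta_f^{(j)}$ is given by an explicit Dirichlet eigenfunction expansion whose uniform $H^{1/2}(\p M)$-boundedness on $\mathbb H$ is elementary (the denominators $(\lambda_k^{(j)}+z)(\lambda_k^{(j)}+\mu)$ stay away from zero since $\operatorname{Re} z\geq 0$ and the eigenvalues are positive). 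So the obstacle you flag is real but fixable; the expansion route sidesteps it entirely.

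The genuine gap is your endgame. The result of \cite{UW21} does not prove that equality of the DN maps of $-\Delta_g-\lambda_0^2$ at a \emph{single} large frequency determines a general compact Riemannian manifold up to a boundary-fixing diffeomorphism: as this paper's introduction states, \cite{UW21} concerns the operator $-\Delta_g+V-\lambda^2$ with $|\lambda|\gg\norm{V}_{L^\infty(M)}$, i.e., a variation of (IP2)--(IP3) within a known conformal class and under additional geometric hypotheses of geodesic ray transform type — it is not a solution of (IP1). If one fixed frequency sufficed for arbitrary metrics, the geometric Calder\'on problem, described in the introduction as largely open for $n\geq 3$, would essentially be settled; so your final step cannot go through as stated. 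Ironically, you set up exactly the right information and then discard it: knowing $\Lambda_{g_1,z}=\Lambda_{g_2,z}$ for all $z\in\mathbb H$ is, by \cite[Theorem 1]{KKLM}, equivalent to equality of the full boundary spectral data — the Dirichlet eigenvalues \emph{together with} the boundary traces $\p_{\nu_j}\phi_k^{(j)}|_{\p M}$ of the eigenfunctions — which reduces the problem to Gel'fand's inverse boundary spectral problem, solved on arbitrary compact manifolds by the boundary control method of \cite{BK92}; this is precisely how the paper produces $\Phi$. Your remark that the two resolvent families have common poles yields the common spectrum but not the matched eigenfunction boundary data; the residue/\cite{KKLM} step is what is missing, and substituting it for the appeal to \cite{UW21} would complete your proof along the paper's lines.
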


\section{Proofs of the main theorems}
Throughout this section, we write $\nabla$ for the gradient on $(M,g)$ and, given any $p\in M$ and $X,Y \in T_pM$, let $\pair{X,Y}$ stand for $g(X,Y)$. We begin with the proof of our uniqueness result regarding (IP2).

\begin{proof}[Proof of Theorem~\ref{thm_ta_1}]
	It is well known that $\Lambda_{cg}=\Lambda_g$ implies that $c|_{\p M}=1$ and $\p_\nu c|_{\p M}=0$, see e.g. \cite{DKLS}. In view of the equality 
	$$ -\Delta_{cg} u = c^{-\frac{n+2}{4}}(-\Delta_g + V)(c^{\frac{n-2}{4}}u),$$
	with
	\begin{equation}\label{V_def}
	V = c^{-\frac{n-2}{4}}\Delta_g(c^{\frac{n-2}{4}}).
	\end{equation}
	we obtain from $\Lambda_{cg}$ the Dirichlet-to-Neumann map, $\Lambda_{g,V}$, for the equation
	\begin{align}
	\label{schrodinger}
	- \Delta_{g} u+V u &= 0,\quad \text{on $M$},
	\\\notag
	u|_{\p M} &= f.
	\end{align}
	Moreover, since $\Lambda_{cg}=\Lambda_{g}$, we conclude that the Dirichlet-to-Neumann map for \eqref{schrodinger} is equal to the Dirichlet-to-Neumann map for harmonic functions on $(M,g)$, that is to say 
	$$ \Lambda_{g,V} =\Lambda_{g,0},$$
	where $V$ is given via \eqref{V_def}. Let us extend $V$ to $\R\times M_0$ by setting it to zero outside $M$. Let $\gamma:I\to M_0$ be an inextendible unit speed geodesic on $M_0$. Applying equation (3.6) in \cite{DKLS}, taking $\lambda=0$ there, we deduce that 
	\begin{equation}\label{geodesic_eq}\int_I \hat{V}(0,\gamma(s))\,ds =0,\end{equation}
	where $\hat{V}(s,x)$ is the Fourier transform of $V(t,x)$ in the $t$-variable, that is, 
	$$ \hat{V}(s,x)=\int_\R V(t,x) e^{-ist}\,dt.$$
	As equation \eqref{geodesic_eq} is valid along all inextendible geodesics on $M_0$,  we conclude via lifting $\hat{V}(0,x)$ to the normal bundle and applying Santal\'o's formula, see \cite[Lemma 3.3]{GMT} for a version that does not impose convexity conditions on $\p M$, that 
	$$ \int_{M_0} \hat{V}(0,x)\,dV_{g_0}=0.$$
	Noting that $\hat{V}(0,x)= \int_\R V(t,x)\,dt,$
	we deduce that
	$$\int_M V\,dV_g=0.$$
	Recalling that $V= \frac{\Delta_gw}{w}$ with $w=c^{\frac{n-2}{4}}$, we write
	$$ 0= \int_M \frac{\Delta_g w}{w}\,dV_g =-\int_M \pair{\nabla w,\nabla w^{-1}}\,dV_g=\int_M w^{-2}\,\pair{\nabla w,\nabla w}\,dV_g,$$
	where we used the fact that $\p_\nu w|_{\p M}=0$. Thus, $\pair{\nabla w,\nabla w}=0$ on $M$. As $w=1$ on $\p M$, we conclude that $w=1$ everywhere on $M$.
\end{proof}

Next, we prove our rigidity result regarding (IP3).

\begin{proof}[Proof of Theorem~\ref{thm_ta_2}]
	We start by noting that analogously to the proof of Theorem~\ref{thm_ta_1}, one can use equation (3.6) in \cite{DKLS} together with Santalo's formula to conclude that
	\begin{equation}\label{V_int}
	\int_M V\,dV_g =0.
	\end{equation}
	Next, we write $u$ for the solution to \eqref{pf_schrodinger} with Dirichlet boundary data $f=1$. There holds,
	\begin{align*}
	u = 1 + \dot u + r,
	\end{align*}
	where
	\begin{align}\label{eq_dotu}
	- \Delta_g \dot u + V &= 0,\quad \text{on $M^{\textrm{int}}$},
	\\\notag
	\dot u|_{\p M} &= 0,
	\end{align}
	and
	\begin{align}\label{eq_remainder_1}
	- \Delta_g r + V r+ V\dot u &= 0,\quad \text{on $M^{\textrm{int}}$},
	\\\notag
	r|_{\p M} &= 0.
	\end{align}
	By elliptic estimates, we have 
	\begin{equation}
	\label{est_1}
	\|r\|_{H^2(M)} \leq C\,\delta\, \|\dot u\|_{L^2(M)},
	\end{equation}
	for some constant $C>0$ that depends only on $(M,g)$. Also in view of the Poincar\'{e} inequality, there holds
	\begin{equation}
	\label{poincare_1}
	C_0\int_M |\dot u|^2\,dV_g \leq \int_M |\nabla \dot u|^2\,dV_g,
	\end{equation}
	for some constant $C_0>0$ that only depends on $(M,g)$.
	As $0 = \Lambda_{g,0}(f) = \Lambda_{g,V}(f)$ for the choice $f=1$, there holds
   \begin{align*}
0 
&= 
\int_{\p M} u \p_\nu u \,dV_g
= 
\int_M u \Delta_g u\,dV_g + \int_M \pair{\nabla u, \nabla u}\,dV_g
\\&= \int_M u V u\,dV_g + \int_M \pair{\nabla u, \nabla u}\,dV_g
\\&= 
\int_M V \,dV_g+ 2\int_M V \dot u\,dV_g + 2\int_M V r\,dV_g +2\int_M V\dot u\,r\,dV_g +\int_M V \dot u^2\,dV_g\\
&\,\,+\int_M V r^2 \,dV_g+ \int_M |\nabla r|^2\,dV_g+\int_M |\nabla\dot u|^2\,dV_g +2\int_M \pair{\nabla\dot u, \nabla r}\,dV_g.
    \end{align*}
Using equation \eqref{V_int} together with the identities
    \begin{align*}
0 &= \int_M (-\Delta_g \dot u + V) \dot u\,dV_g
= \int_M \pair{\nabla \dot u,\nabla \dot u}\,dV_g + \int_M V \dot u\,dV_g,
\\
0 &= \int_M (-\Delta_g \dot u + V) r\,dV_g
= \int_M \pair{\nabla \dot u,\nabla r}\,dV_g + \int_M V r\,dV_g,
    \end{align*}
we get
   \begin{align*}
0 
&= -\int_M |\nabla\dot u|^2\,dV_g +2\int_M V\dot u\,r\,dV_g +\int_M V \dot u^2\,dV_g\\
&\,\,+\int_M V r^2\,dV_g + \int_M |\nabla r|^2\,dV_g.
\end{align*}
This can be reduced further by using
    \begin{align*}
0 &= \int_M (-\Delta_g r + V r + V \dot u) r\,dV_g
= \int_M |\nabla r|^2\,dV_g + \int_M V r^2\,dV_g + \int_M V \dot u r\,dV_g.
    \end{align*}
That is,
   \begin{align*}
0 
&= -\int_M |\nabla\dot u|^2\,dV_g +\int_M V\dot u\,r\,dV_g +\int_M V \dot u^2\,dV_g.
\end{align*}
Rearranging, we get 
 \begin{align}
 \label{identity_esp}
\int_M |\nabla\dot u|^2\,dV_g &=  \int_M V\dot u\,r\,dV_g +\int_M V \dot u^2\,dV_g.
\end{align}
The claim follows from combining Poincar\'{e}'s inequality \eqref{poincare_1} and the following bounds
 \begin{align*}
|\int_M V\dot u\,r\,dV_g| &\leq C\delta^2 \|\dot u\|_{L^2(M)}^2,\\
|\int_M V\dot u^2\,dV_g| &\leq \delta \|\dot u\|_{L^2(M)}^2,
\end{align*}
which hold thanks to \eqref{est_1}.
\end{proof}

Next, we prove our theorem regarding convexity of the Dirichlet-to-Neumann maps for \eqref{pf_schrodinger}.

\begin{proof}[Proof of Theorem~\ref{thm_convex}]
Given any $t \in [0,1]$ and $f \in H^{\frac{3}{2}}(\p M)$ let us define
\begin{equation}\label{H_def} H_f(t) = \int_{\p M} f\, \Lambda_{g,(1-t)V_1+tV_2}(f)\,dV_g.\end{equation}
In order to prove \eqref{claim_convexity}, it suffices to prove that
\begin{equation}
\label{claim_ineq}
H''_f(t) \leq 0 \quad \forall\,t\in [0,1] \quad \forall\, f\in H^{\frac{3}{2}}(\p M).
\end{equation}
Let us fix $f \in H^{\frac{3}{2}}(\p M)$. Consider the equation
   \begin{equation}
   \label{t_eq}
   \begin{aligned}
- \Delta_g u_t + ((1-t)\,V_1+t\,V_2) u_t &= 0,\qquad \text{on $M^{\textrm{int}}$},
\\
u_t|_{\p M} &= f.
\end{aligned}
\end{equation}
Defining 
$$ q:= V_2-V_1 \quad \text{on $M^{\textrm{int}}$},$$
we write
    \begin{align*}
u_t = u_0 + t\,v + t^2\,r_t,\qquad \forall\, t\in [0,1]
    \end{align*}
where
 \begin{align}\label{eq_u_0}
- \Delta_g u_0 + V_1 u_0 &= 0,\qquad \text{on $M^{\textrm{int}}$}
\\\notag
u_0|_{\p M} &= f,
    \end{align}
and
    \begin{align}\label{eq_v}
- \Delta_g v + V_1 v+q u_0 &= 0,\qquad \text{on $M^{\textrm{int}}$},
\\\notag
v|_{\p M} &= 0,
    \end{align}
    and finally
    \begin{align}\label{eq_r}
    (-\Delta_g + V_1+ t\,q) r_t+ q\,v &= 0,\qquad \text{on $M^{\textrm{int}}$},
    \\\notag
    r_t|_{\p M} &= 0.
    \end{align}  
    It is straightforward to show that $r_t$ depends smoothly on $t\in [0,1]$ taking values in $H^2(M)$. Define 
    $$ \dot{r}_t = \p_t r_t \quad \text{and}\quad \ddot{r}_t=\p_t^2 r_t\qquad \forall\, t\in [0,1].$$
    We have
\begin{align}\label{eq_r_dot}
(-\Delta_g +V_1+ t\,q) \dot{r}_t+ q\,r_t &= 0, \qquad \text{on $M^{\textrm{int}}$,}
\\\notag
\dot{r}_t|_{\p M} &= 0,
\end{align}  
    and
    \begin{align}\label{eq_r_ddot}
    (-\Delta_g +V_1+ t\,q) \ddot{r}_t+ 2\,q\,\dot{r}_t &= 0, \qquad \text{on $M^{\textrm{int}}$},
    \\\notag
    \ddot{r}_t|_{\p M} &= 0.
    \end{align}  
By elliptic estimates for \eqref{eq_r}--\eqref{eq_r_ddot} we have
\begin{equation}
\label{est}
\|r_t\|_{H^2(M)} +\delta^{-1}\|\dot{r}_t\|_{H^2(M)}+\delta^{-2}\|\ddot{r}_t\|_{H^2(M)} \leq C\,\delta\, \|v\|_{L^2(M)},
\end{equation}
for some constant $C>0$ that depends only on $(M,g)$.

We return to the definition \eqref{H_def} and write
    \begin{align*}
H_f(t) 
&= 
\int_{\p M} u_t \p_\nu u_t\,dV_g 
= 
\int_M u_t \Delta_g u_t + \int_M \pair{\nabla u_t, \nabla u_t}\,dV_g
\\&= \int_M (V_1+tq)u_t^2\,dV_g + \int_M \pair{\nabla u_t, \nabla u_t}\,dV_g
\\&= 
\int_M (V_1u_0^2+|\nabla u_0|^2)\,dV_g + t\int_M (2V_1u_0v+qu_0^2+2\pair{\nabla u_0,\nabla v})\,dV_g\\
&+t^2\int_M(V_1v^2+2qu_0v+2V_1u_0r_t+|\nabla v|^2+2\pair{\nabla u_0,\nabla r_t} )\,dV_g \\
&+ t^3\int_M (qv^2+2qu_0r_t+2V_1vr_t+2\pair{\nabla v,\nabla r_t})\,dV_g\\
&+t^4\int_M(2qvr_t+V_1r_t^2+|\nabla r_t|^2)\,dV_g +t^5 \int_M qr_t^2\,dV_g. 
    \end{align*}
Using equations \eqref{eq_u_0}--\eqref{eq_r}, we observe that certain terms on the right hand side may be simplified through integration by parts. Indeed,
    \begin{align*}
\int_M (V_1u_0r_t+\pair{\nabla u_0,\nabla r_t})\,dV_g&=0,
\\
\int_M (qu_0r_t+V_1vr_t+\pair{\nabla v,\nabla r_t})\,dV_g&=0,
    \end{align*}
    and 
    \begin{align*}
    \int_M (2qu_0v+ |\nabla v|^2+V_1v^2)\,dV_g&=-\int_M |\nabla v|^2\,dV_g-\int_M V_1v^2\,dV_g,\\
    \int_M (qvr_t+V_1r_t^2+|\nabla r_t|^2+tqr_t^2)\,dV_g &=0.
   \end{align*} 
Using these four observations, we obtain
   \begin{align*}
H_f(t)&=\int_M (V_1u_0^2+|\nabla u_0|^2)\,dV_g + t\int_M (2V_1u_0v+qu_0^2+2\pair{\nabla u_0,\nabla v})\,dV_g\\
&-t^2\int_M(V_1v^2+|\nabla v|^2)\,dV_g+ t^3\int_M qv^2\,dV_g+t^4\int_Mqvr_t\,dV_g. 
    \end{align*}
By differentiating $H_f(t)$ twice in the variable $t\in [0,1]$, we deduce that
   \begin{align}
   \label{F_hessian_reduced}
H''_f(t)
&=-2\int_M (|\nabla v|^2+V_1v^2)\,dV_g +6t\int_M qv^2 \,dV_g\\
&\notag + 12t^2\int_M qvr_t\,dV_g+8t^3 \int_M qv\dot{r}_t\,dV_g+t^4\int_M qv\ddot{r}_t\,dV_g.
\end{align}
In view of \eqref{est} we have the following bounds
 \begin{align*}
|\int_M V_1v^2 \,dV_g|+|\int_M q v^2|&\leq 3\delta \|v\|_{L^2(M)}^2,\\
|\int_M qvr_t\,dV_g|+|\int_M qv\dot{r}_t\,dV_g|+|\int_M qv\ddot{r}_t\,dV_g| &\leq 6C\delta^2 \|v\|_{L^2(M)}^2.
\end{align*}
Combining these bounds with equation \eqref{F_hessian_reduced}, and using Poincar\'{e}'s inequality, we see that for $\delta>0$ small, depending only on $(M,g)$, there holds 
$$ H''_f(t) \leq -\int_M |\nabla v|^2\,dV_g\leq 0,$$
thus proving \eqref{claim_ineq}. Moreover, it is clear from the latter inequality that if $H''_f(t_0)=0$ for some nontrivial $f \in H^{\frac{3}{2}}(\p M)$ and some $t_0\in [0,1]$, then 
$$ \int_M |\nabla v|^2\,dV_g=0,$$
and thus $v=0$ on $M$. This in turn implies that $(V_1-V_2)u_0=0$ on $M$. As $f$ is nontrivial, it follows that $V_1-V_2=0$ on $M$.
\end{proof}

We move to proofs for the semilinear equations.

\begin{proof}[Proof of Theorem~\ref{thm_main_0}]
For $\varepsilon \in \R$ in a sufficiently small neighborhood of the origin, let us define $u_\varepsilon$ to be the unique small solution to \eqref{pf_nonlinear} with $F(x,u)=V(x)u^m$, subject to the constant Dirichlet data $f=\varepsilon$ on $\p M$. It can be verified analogously to \cite{FO20} that $u_\varepsilon$ depends smoothly on the parameter $\varepsilon$ with values in $C^{2,\alpha}(M)$. Thus, we may define 
$$ v_k=\frac{1}{k!}\frac{\p^k}{\p \varepsilon^k}u_\varepsilon|_{\varepsilon=0}\qquad k=0,1,2,\ldots.$$
As $u_\varepsilon$ solves \eqref{pf_nonlinear} with $F(x,u)=V(x)u^m$, differentiation in $\varepsilon$ yields that 
$$v_0\equiv 0,\qquad v_1\equiv 1,$$
and that the first nontrivial function $v_k$ with $k\geq 2$ is $v_m\in C^{2,\alpha}(M)$ that solves the elliptic boundary value problem
\begin{equation}\label{v_m_eq}
\begin{aligned}
\begin{cases}
-\Delta_{g}v_m+V=0, 
&\text{on  $M^{\textrm{int}}$},
\\
v_m=0 & \text{on $\p M$}.
\end{cases}
\end{aligned}
\end{equation}
We can therefore write
\begin{equation}\label{u_exp}
u_\varepsilon = \varepsilon + \varepsilon^m v_m(x)+ r_\varepsilon(x)\qquad \forall\, x\in M,
\end{equation}
where
$$ \|r_\varepsilon\|_{C^{2,\alpha}(M)}\leq C |\varepsilon|^{m+1},$$
for some constant $C>0$ independent of $|\varepsilon|$. For future reference, we record that
\begin{equation}
\label{v_m_iden}
\int_M \langle\nabla v_m,\nabla v_m\rangle\,dV_g +\int_M V\,v_m\,dV_g=0,
\end{equation}
a consequence of \eqref{v_m_eq} and Green's identity.

Note that in view of the hypothesis of Theorem~\ref{thm_main_0}, there holds: 
$$\p_\nu u_\varepsilon|_{\p M}\equiv 0.$$
Applying Green's identity we write 
\begin{multline}\label{lhs}
0=\int_{\p M} u_\varepsilon\, \p_\nu u_\epsilon\,dV_g  = \int_{M} u_\varepsilon\, \Delta_g u_\varepsilon\,dV_g + \int_M \langle\nabla u_\varepsilon,\nabla u_\varepsilon\rangle\,dV_g\\
=\int_{M} V\,u_\varepsilon^{m+1}\,dV_g + \int_M \langle\nabla u_\varepsilon,\nabla u_\varepsilon\rangle\,dV_g.
\end{multline}
Using the expansion \eqref{u_exp}, the right hand side of the latter equation can be reduced to the following form:
\begin{multline}\label{rhs}
\varepsilon^{m+1}\,\int_{M}V\,dV_g + (m+1)\varepsilon^{2m}\int_M V\,v_m\,dV_g + \\
	\varepsilon^{2m}\int_M \langle\nabla v_m,\nabla v_m\rangle\,dV_g + O(|\varepsilon|^{2m+1}).
	\end{multline}
It follows from \eqref{lhs} and \eqref{rhs} that there holds:
$$ \int_{M}V\,dV_g=0,$$
and also that
$$
\int_M \langle\nabla v_m,\nabla v_m\rangle\,dV_g + (m+1)\int_M V\,v_m\,dV_g=0.
$$
Recalling \eqref{v_m_iden}, the latter expression reduces to
\begin{equation}
\label{key_iden}
\int_M \langle\nabla v_m,\nabla v_m\rangle\,dV_g=0.
\end{equation}
It follows from \eqref{key_iden} together with the fact that $v_m|_{\p M}=0$ that $v_m$ is identically zero. Finally, it follows from \eqref{v_m_eq} that $V$ is also identically zero.
\end{proof}

Finally, we prove the theorem regarding recovery of general Riemannian manifolds.

\begin{proof}[Proof of Theorem~\ref{thm_main}]

Recall that $F(x,s)=s\,G(s)$ with $s \in \R$ where $G$ is a smooth function that satisfies (H1). As a consequence $F$ satisfies (A1)--(A2) with $J=\N$. Given any $k \in \N$, there are constants $\delta_{k,0},\delta_{k,1}>0$ depending only on $M$ , $g_1$, $g_2$  and $k$ such that for any $f\in C^{\infty}(\p M)$ subject to 
\begin{equation}\label{f_small}
\|f\|_{C^{2,\alpha}(\p M)}\leq \delta_{k,0},\end{equation}
and given $j=1,2$, there exists a unique solution $$u_j:=L_{g_j,k}f\in C^{\infty}(M)$$ 
to the semilinear equation
\begin{equation}\label{nonlinear_eq_1}
\begin{cases}
-\Delta_{g_j}u_j+u_j\,G(u_j)=0 & \mbox{on}\ M^{\textrm {int}},\\  
u_j=t_k+f & \mbox{on}\ \p M.
\end{cases}
\end{equation}
subject to the constraint $\|u_j-t_k\|_{C^{2,\alpha}(M)}\leq \delta_{k,1}$. It follows that given any $f\in C^{\infty}(\p M)$ there holds 
$$ \frac{\p}{\p\epsilon} \Lambda_{g_j,F}(t_k+\epsilon f)|_{\epsilon=0}= \p_{\nu_j} v^{(j)}_{k}|_{\p M}=:\Gamma_{g_j,k}(f),\quad j=1,2,$$
where $\nu_j$ is the exterior unit normal field on $\p M$ with respect to the metric $g_j$ and $v^{(j)}_{k}\in C^{\infty}(M)$, $j=1,2,$ $k\in \N$, is the unique solution to the linearized equation
 \begin{equation}\label{linear_eq}
\begin{cases}
-\Delta_{g_j}v^{(j)}_{k}+t_k\,G'(t_k)\,v^{(j)}_{k}=0 & \mbox{on}\ M^{\textrm {int}},\\  
 v^{(j)}_{k}=f & \mbox{on}\ \p M.
\end{cases}
 \end{equation}
 We remark that for each $k\in \N$, the map $\Gamma_{g_j,k}$, $j=1,2,$ is the Dirichlet-to-Neumann map associated to the linear equation \eqref{linear_eq}. Observe also that $\Lambda_{g_j,F}$ determines the maps $\Gamma_{g_j,k}$ for any $k\in \N$.  By density of $C^{\infty}(\p M)$ in $H^{\frac{3}{2}}(\p M)$, it follows that 
\begin{equation}\label{Gamma_eq} \Gamma_{g_1,k}(f)=\Gamma_{g_2,k}(f)
\end{equation}
for all $f\in H^{\frac{3}{2}}(\p M)$ and all $k\in \N$.  

Let $j=1,2$ and let us denote by $0<\lambda^{(j)}_1\leq \lambda_2^{(j)}\leq \ldots$ the Dirichlet eigenvalues of $-\Delta_{g_j}$ on $M$ and by $\{\phi_\ell^{(j)}\}_{\ell=1}^{\infty}\subset H^1_0(M)$ an $L^2(M)$-orthonormal basis (with respect to the metric $g_j$) such that 
  \begin{equation}\label{eigen_eq}
\begin{cases}
-\Delta_{g_j}\phi_{\ell}^{(j)}=\lambda_\ell^{(j)}\phi_\ell^{(j)} & \mbox{on}\ M^{\textrm {int}},\\  
 \phi_{\ell}^{(j)}=0 & \mbox{on}\ \p M.
\end{cases}
 \end{equation}
For each $f \in H^{\frac{3}{2}}(\p M)$ and each
$$ z \in \mathbb H = \{x+\textrm{i}\,y\in \C\,:\, x\geq 0\},$$
consider the equation
\begin{equation}\label{linear_eq_2}
\begin{cases}
(-\Delta_{g_j}+z)u^{(j)}_{z}=0 & \mbox{on}\ M^{\textrm {int}},\\  
u_{z}^{(j)}=f & \mbox{on}\ \p M.
\end{cases}
\end{equation}
Writing $\mu=t_1G'(t_1)$, $\psi_k^{(j)}= \p_{\nu_j} \phi^{(j)}_k|_{\p M}$, $j=1,2,$ and $k\in\N$, we define for each $f \in H^{\frac{3}{2}}(\p M)$ and $j=1,2,$ the partial sums
$$ K_{f,N}^{(j)}(z,x) = \sum_{k=1}^{N} \frac{(f,\psi^{(j)}_k)_{L^2(\p M)}}{(\lambda^{(j)}_k+z)(\lambda_k^{(j)}+\mu)}\phi_k^{(j)}(x)\in H^1_0(M)\cap C^{\infty}(M).$$
Using Green's identity we note that 
$$(f,\psi_k^{(j)})_{L^2(\p M)}= -\lambda_k^{(j)}\int_M u_0^{(j)}\,\phi_k^{(j)}\,dV_{g_j}\quad j=1,2,\quad k\in \N,$$  where $u_0^{(j)}$ is the unique solution to \eqref{linear_eq_2} with $z=0$ and Dirichlet data $f$. Returning to the partial sums, we obtain that
$$ \Delta_{g_j}K_{f,N}^{(j)}=\sum_{k=1}^{N} (\lambda_k^{(j)})^2\,\frac{(u^{(j)}_0,\phi^{(j)}_k)_{L^2( M)}}{(\lambda^{(j)}_k+z)(\lambda_k^{(j)}+\mu)}\phi_k^{(j)}$$
As $\{\phi_k^{(j)}\}_{k=1}^{\infty}\subset H^1_0(M)$ is an $L^2(M)$-orthonormal basis and as $\mu>0$ and $z \in \mathbb H$, we deduce that 
$$ \|\Delta_{g_j}K_{f,N}^{(j)}\|_{L^2(M)} \leq \|u_0^{(j)}\|_{L^2(M)}\leq C\|f\|_{H^{\frac{3}{2}}(\p M)},$$
for some constant $C>0$ that is independent of $z\in \mathbb H$, $j=1,2,$ and $N\in \N$. By elliptic regularity, we deduce that 
$$ \|K_{f,N}^{(j)}\|_{H^2(M)}\leq C\|f\|_{H^{\frac{3}{2}}(\p M)},$$
for some constant $C>0$ that is independent of $z\in \mathbb H$, $j=1,2,$ and $N\in \N$. As a consequence of the latter computation and continuity of trace operators, we may define the Banach valued holomorphic function
$$\zeta_f^{(j)}(z)= \sum_{k=1}^{\infty} \frac{(f,\psi^{(j)}_k)_{L^2(\p M)}}{(\lambda^{(j)}_k+z)(\lambda_k^{(j)}+\mu)}\psi_k^{(j)}, \quad \text{in $H^{\frac{1}{2}}(\p M)$},$$
for all $z\in \mathbb H$ and note that $\zeta_f^{(j)}$ is uniformly bounded on $\mathbb H$ in the sense of taking values in $H^{\frac{1}{2}}(\p M)$. Following \cite[Lemma 2.2]{eric} with trivial modifications, we record the identity
\begin{equation}\label{normal_diff}
\p_{\nu_j} u^{(j)}_z|_{\p M} - \p_{\nu_j} u^{(j)}_{\mu}|_{\p M} = (z-\mu)\,\zeta_f^{(j)}(z),\quad j=1,2\quad z \in \mathbb D,\end{equation}
where $u^{(j)}_z$ is the unique solution to \eqref{linear_eq_2} with Dirichlet data $f$ and $u^{(j)}_\mu$ is the unique solution to \eqref{linear_eq_2} with $z=\mu=t_1G'(t_1)$ and with Dirichlet data $f$.

For each fixed $f\in H^{\frac{3}{2}}(\p M)$ let us define $\zeta_f: \mathbb H \to H^{\frac{1}{2}}(\p M)$ via
\begin{equation}
\label{zeta_def}
\zeta_f(z)= \zeta_f^{(1)}(z)-\zeta_f^{(2)}(z).\end{equation}
In view of the above discussion, we have that $\zeta_f(z)$ is holomorphic and uniformly bounded on $z\in \mathbb H$ in the sense of taking values in $H^{\frac{1}{2}}(\p M)$. Moreover, we have
$$ \zeta_f(t_kG'(t_k))=0 \quad \text{for $k=2,3,\ldots$}$$ 
thanks to \eqref{Gamma_eq}, and \eqref{normal_diff}--\eqref{zeta_def}.

Let $\mathbb D$ denote the open unit disk in the complex plane and for each $z\in \mathbb D$, define $h_f(z)= \zeta_f(\frac{1+z}{1-z})$. It is clear that $h_f$ is a bounded holomorphic function on $\mathbb D$ and as a consequence either $h_f\equiv 0$ or else its nonzero roots must satisfy the well known Blaschke condition. Noting that $h_f(r_k)=0$ for $r_k=\frac{t_kG'(t_k)-1}{1+t_kG'(t_k)}$, $k=2,3,\ldots$, and recalling that $t_kG'(t_k)>1$, we write
$$ \sum_{k=2}^{\infty} (1-|r_k|) = 2\sum_{k=2}^{\infty}\frac{1}{1+t_kG'(t_k)} =\infty,$$
where in the last step we used (H1). Therefore, the Blaschke condition is not satisfied for the nonzero roots of $h_f(z)$, implying that $h_f(z)\equiv 0$ on $\mathbb D$, and thus    
$$\zeta_f(z)=0,\quad \forall\, z\in \mathbb H, \quad f\in H^{\frac{3}{2}}(\p M).$$
The latter equality implies that the Dirichlet-to-Neumann maps for the equation \eqref{linear_eq_2} with $j=1,2$ are equal to each other for all $z \in \mathbb H$. Applying \cite[Theorem 1]{KKLM}, it follows that the Dirichlet spectral data for the two manifolds $(M,g_1)$ and $(M,g_2)$ are identical and as such we have reduced our inverse problem to the standard Gel'fand spectral inverse problem studied in \cite{BK92}. Applying the main result in \cite{BK92}, we conclude that there is a diffeomorphism 
$$\Phi: M \to M,$$
that is equal to identity on $\p M$ and such that
$$\Phi^\star g_2=g_1.$$
\end{proof}

\end{document}